\theoremstyle{definition}
\newtheorem{definition}{Definition}[section]
\theoremstyle{plain}
\newtheorem{Theorem}[definition]{Theorem}
\newtheorem{lemma}[definition]{Lemma}
\newtheorem{corollary}[definition]{Corollary}
\newtheorem{proposition}[definition]{Proposition}
\newtheorem{exam}[definition]{Example}
\theoremstyle{remark}
\newtheorem{remark}[definition]{Remark}
\def\sqr#1#2{{\vcenter{\vbox{\hrule height.#2pt
              \hbox{\vrule width.#2pt height#1pt \kern#1pt \vrule
width.#2pt}
              \hrule height.#2pt}}}}
\def\dbA{{\mathbb{A}}}
\def\dbC{{\mathbb{C}}}
\def\3n{\negthinspace \negthinspace \negthinspace }
\def\2n{\negthinspace \negthinspace }
\def\1n{\negthinspace }
\def\cV{{\cal V}}
\def\cW{{\cal W}}
\def\no{\noindent}
\def\ms{\medskip}
\def\bs{\bigskip}
\def\dim{\hbox{\rm dim$\,$}}
\def\({\Big (}
\def\){\Big )}
\def\[{\Big[}
\def\]{\Big]}
\def\be{\begin{equation}}
\def\bel{\begin{equation}\label}
\def\ee{\end{equation}}
\def\bea{\begin{eqnarray}}
\def\eea{\end{eqnarray}}
\def\bt{\begin{theorem}}
\def\et{\end{theorem}}
\def\bc{\begin{corollary}}
\def\ec{\end{corollary}}
\def\bl{\begin{lemma}}
\def\el{\end{lemma}}
\def\bp{\begin{proposition}}
\def\ep{\end{proposition}}
\def\br{\begin{remark}}
\def\er{\end{remark}}
\def\ba{\begin{array}}
\def\ea{\end{array}}
\def\bd{\begin{definition}}
\def\ed{\end{definition}}
\begin{document}

\title{\bf Commutativity of normal compact operators via projective spectrum \thanks{This work is supported by NSFC(No. 11471189)}}
\author{
Tong Mao\thanks{Taishan College, Shandong University, Jinan 250100, China. {\small\it
e-mail:} {\small\tt aaa1bbbb1ccc@sina.com}.}
 ~~~~~~
Yikun Qiao\thanks{Taishan College, Shandong University, Jinan 250100, China. {\small\it
e-mail:} {\small\tt qiaoyikun@hotmail.com}.}
~~~~~~
Penghui Wang\thanks{School of
Mathematics, Shandong University, Jinan 250100, China. {\small\it
e-mail:} {\small\tt phwang@sdu.edu.cn}. \ms}
}
%\date{}
%\date{}
\maketitle
\begin{abstract}
%In this note, we will study the commutativity of normal compact operators with the strong Agmon condition by using the projective spectrum. This generalizes the result in \cite{GST}.
In this note we obtain commutativity criteria for   normal compact operators using the projective spectrum. We thus improve a corresponding result obtained by Chagouel, Stessin and Zhu in \cite{GST}.
\end{abstract}

\bs

\no{\bf 2000 Mathematics Subject Classification}.  Primary 47A13, 47A10.

\bs

\no{\bf Key Words}.  Normal operator, compact operator, projective spectrum, Agmon's Condition
\section{Introduction}

In \cite{Ya}, R. Yang introduced the concept of  projective spectrum.  For an $n$-tuple $\dbA=(A_1,\dots,A_n)$ of operators acting on a Hilbert space $H$,  the \emph{projective spectrum} of $\dbA$ is defined by
\begin{equation*}
\Sigma(\dbA)=\{(z_1,\dots,z_n)\in\dbC^n:z_1A_1+\dots+z_nA_n \text{ is noninvertible}\}.
\end{equation*}
Obviously, if $H$ is infinite-dimensional, and all of $A_i$'s are compact, then $\Sigma(\mathbb A)=\mathbb C^n$. To study the commutativity of normal compact operators, in \cite{GST}, the authors gave the following modified definition of projective spectrum
\begin{equation*}
\sigma(\dbA)=\{(z_1,\dots,z_n)\in\dbC^n:I+z_1A_1+\dots+z_nA_n \text{ is noninvertible}\},
\end{equation*}
and the point projective spectrum
\begin{equation*}
\sigma_p(\dbA)=\{(z_1,\dots,z_n)\in\dbC^n:\ker(I+z_1A_1+\dots+z_nA_n)\not=0\}.
\end{equation*}
By using the modified projective spectrum, I.Chagouel, M. Stessin and K. Zhu obtained the following theorem.
\begin{Theorem}[Chagouel, Stessin and Zhu, 2016]\label{CSZ}
Let $\mathbb A=(A_1,A_2,\dots,A_n)$ be an $n$- tuple of compact operators on a Hilbert space $H$. Suppose that
 \begin{itemize}
 \item[1)] each $A_i$ is self-adjoint and $\dim H=\infty$,
 \item[2)] each $A_i$ is normal and $\dim H<\infty$.
 \end{itemize}
Then the operators $A_1,\dots,A_n$ pairwise commute if and only if their projective spectrum $\sigma_p(\mathbb A)$ consists of countably many, locally finite, complex hyperplanes  in $\mathbb C^n$. Where, ``locally finite'' means that for each $z_0\in \mathbb C^n$, there is a neighborhood $U_0$ of $z_0$, such that $U_0\cap \sigma_p(\mathbb A)$ has finite branches.

\end{Theorem}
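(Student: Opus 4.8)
\emph{The two directions.} The plan is to prove the two implications separately. For the forward (``only if'') direction, suppose the $A_i$ pairwise commute. In both cases the hypotheses force simultaneous diagonalization: commuting normal matrices (case 2) are simultaneously unitarily diagonalizable, and finitely many commuting self-adjoint compact operators (case 1) admit a common orthonormal eigenbasis by the spectral theorem. Let $\{e_k\}$ be such a basis, $A_ie_k=\lambda_k^{(i)}e_k$. Then $I+\sum_i z_iA_i$ acts diagonally, so $\ker(I+\sum_i z_iA_i)\ne 0$ exactly when $1+\sum_i\lambda_k^{(i)}z_i=0$ for some $k$; hence $\sigma_p(\mathbb A)=\bigcup_k\{z:1+\sum_i\lambda_k^{(i)}z_i=0\}$, a countable union of hyperplanes (finite in case 2). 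For local finiteness --- automatic in case 2 where there are finitely many hyperplanes --- I would note that in case 1 the distance from the origin to the $k$-th hyperplane is $(\sum_i|\lambda_k^{(i)}|^2)^{-1/2}\to\infty$, since compactness forces $\lambda_k^{(i)}\to 0$; thus every ball meets only finitely many of the hyperplanes.

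\emph{Reduction of the converse.} For the ``if'' direction it suffices to show each pair commutes, and after relabelling I may take the pair to be $A_1,A_2$. Intersecting $\sigma_p(\mathbb A)$ with the coordinate plane $\{z_3=\cdots=z_n=0\}$ turns each hyperplane into a line (it cannot become the whole plane, as the defining form equals $1$ at the origin), and this intersection is exactly $\sigma_p(A_1,A_2)\subset\mathbb C^2$. So I am reduced to: $A_1,A_2$ compact and normal (resp.\ self-adjoint), $\sigma_p(A_1,A_2)$ a locally finite union of lines, prove $A_1A_2=A_2A_1$. The key extraction is \emph{property L}: restricting to the line $z_2=sz_1$ shows $(t,ts)\in\sigma_p$ iff $1+(a_j+b_js)t=0$ for some defining line $1+a_jz_1+b_jz_2=0$, so the nonzero eigenvalues of $T(s):=A_1+sA_2$ are the affine-linear functions $a_j+b_js$.

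\emph{Finite-dimensional normal case.} Here $\det(I+z_1A_1+z_2A_2)$ is a polynomial equal to $1$ at the origin whose zero set is a union of lines, hence factors as $\prod_j(1+a_jz_1+b_jz_2)^{m_j}$; comparing with $\det(I+tT(s))$ shows the eigenvalues of $T(s)$ are $\{a_j+b_js\}$ with multiplicities $m_j$. I would then invoke Schur's inequality $\sum_k|\mu_k(s)|^2\le\tr(T(s)^*T(s))$, with equality iff $T(s)$ is normal. The defect is a real function $\alpha+2\Re(\beta s)+\gamma|s|^2$; normality of $A_1=T(0)$ and of $A_2$ (the leading term) forces $\alpha=\gamma=0$, and nonnegativity of $2\Re(\beta s)$ for all $s$ forces $\beta=0$. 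Thus $T(s)$ is normal for every $s\in\mathbb C$; expanding $T(s)T(s)^*=T(s)^*T(s)$ and matching the coefficient of $s$ gives $A_2A_1^*=A_1^*A_2$, and Fuglede--Putnam then yields $A_1A_2=A_2A_1$.

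\emph{Infinite-dimensional self-adjoint case, and the main obstacle.} The difficulty is that traces and determinants are unavailable and eigenvalue multiplicities are invisible in $\sigma_p$ as a set, so I would switch to analytic perturbation theory. By Rellich's theorem the family $T(s)=A_1+sA_2$ has real-analytic eigenvalue branches $\lambda_i(s)$; being analytic and valued in the locally finite union of lines, each branch must coincide globally with a single line $a_i+b_is$, so $\lambda_i''\equiv0$. Fix $s_0$ with $T(s_0)\ne 0$ and let $\lambda_0$ be an eigenvalue of \emph{extremal} modulus, with finite-dimensional eigenspace $F=\mathrm{ran}\,P_0$. Kato's second-order formula gives $\frac12\frac{d^2}{ds^2}\tr(T(s)P(s))\big|_{s_0}=-\sum_{\mu\ne\lambda_0}\frac{\|P_\mu A_2P_0\|_{HS}^2}{\mu-\lambda_0}$, and because $\lambda_0$ is extremal all denominators share one sign; since the left side vanishes (the branches are linear), every term vanishes, i.e.\ $A_2$ maps $F$ into $F$. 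Then $A_2|_F$ is self-adjoint on $F$ and has an eigenvector, which is a common eigenvector of $A_1$ and $A_2$. Finally I would run a maximality argument: the closed span $\mathcal E$ of all common eigenvectors reduces both operators, and if $\mathcal E^\perp\ne0$ the same extremal-eigenvalue argument applied to the restrictions produces a common eigenvector in $\mathcal E^\perp$, a contradiction; hence $\mathcal E=H$, giving a joint eigenbasis and $A_1A_2=A_2A_1$. The extremal-eigenvalue sign trick is what replaces the determinant bookkeeping of the finite-dimensional case, and making it interact correctly with possibly degenerate eigenspaces is the main technical point.
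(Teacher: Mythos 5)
Your proposal is correct in outline, but it takes a genuinely different route from the paper. The paper never proves Theorem \ref{CSZ} by a case split: it derives it as a corollary of the stronger Theorem \ref{Main}, valid for normal compact tuples satisfying the strong Agmon condition (which self-adjoint compact operators and normal matrices satisfy automatically). Its engine is Condition A: Lemma \ref{agmontowagmon} produces a sequence $z_n\to\infty$ with $|1+\lambda z_n|\ge\epsilon$ for all eigenvalues $\lambda$ of $A$; Lemma \ref{lemma3.4} then uses weak compactness of unit kernel vectors of $I+z_nA-\mu^{-1}B$ to manufacture a common eigenvector with $Ax=0$, $Bx=\mu x$; and Lemma \ref{lemma2.1} is a \emph{first-order} Riesz-projection expansion $P_\epsilon=P_0+\epsilon\tilde P+O(\epsilon^2)$ giving $Ax=\lambda x$ and $\mu=\langle Bx,x\rangle$, upgraded to $Bx=\mu x$ by choosing $|\mu|=\|B\|$ extremal and using normality of $B$; a Zorn maximality argument plus Proposition \ref{prop} (restrictions to invariant subspaces keep the line structure, quoted from the proof of Theorem 11 of \cite{GST}) finishes. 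You instead treat the two hypotheses of Theorem \ref{CSZ} with disjoint machinery: in finite dimensions a determinant factorization (property L) plus Schur's inequality and Fuglede--Putnam --- elementary, with no perturbation theory, essentially the classical Motzkin--Taussky/Wiegmann circle of ideas --- and in infinite dimensions Rellich analytic branches, branch linearity via a Baire-category argument, and a \emph{second-order} Kato trace formula whose extremal-sign trick forces $A_2$-invariance of the top eigenspace of $A_1+s_0A_2$. What the paper's route buys is uniformity and strength: one first-order argument covers both cases at once and in fact proves the more general normal-plus-Agmon theorem, which your sign trick cannot reach, since it needs the eigenvalues of $T(s)$ to be real (this is exactly why you must retreat to determinants for normal operators, hence to finite dimensions). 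What your route buys is self-containedness: notably, your exhaustion step does not need Proposition \ref{prop} at all, because eigenvalue branches of the restriction to $\mathcal{E}^\perp$ are automatically valued in the same countable family of lines, whereas the paper has to cite \cite{GST} for the corresponding fact. The details you gloss (the Baire step, the degenerate-eigenvalue form of the second-order trace formula, and the trivial cases $A_1+s_0A_2=0$ or $T\rvert_{\mathcal{E}^\perp}\equiv 0$) are all routine to fill in, so I regard them as acceptable compression rather than gaps.
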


The paper \cite{GST} also pointed out that the theorem does not hold without a normality condition on the tuple. In the present paper, we will show that such a result is true for normal tuples under some mild conditions. As a particular case, we recover the cited result of I.Chagouel, M.Stessin and K.Zhu. In the following we shall use the notation from \cite{GST}. To state our result, we recall that an operator $A$ satisfies Agmon's condition\cite{Ag}, if there is a ray $\{\mathrm{Arg}\lambda=\theta\}$ such that $A$ has no eigenvalues on the ray.  With Agmon's condition, S. Seeley studied the complex powers of elliptic operators. Inspired by Agmon's condition, we introduce the following strengthening of Agmon's condition.

\begin{definition}\label{sagmon}
A normal compact operator $A$ is said to satisfy the strong Agmon condition, if there is an $\epsilon>0$ and  $\theta\in (0,2\pi)$ such that $A$ has no nonzero eigenvalues in $\{z: \theta-\epsilon<\mathrm{Arg} z<\theta+\epsilon\}$.
\end{definition}

The following theorem is the main result in the present note.

\begin{Theorem}\label{Main}
Let $\mathbb A=(A_1,A_2,\dots,A_n)$ be a tuple of  normal compact operators satisfying the strong Agmon condition, then the following conditions are equivalent
\begin{itemize}
\item[1)] $\mathbb A$ is commutative
\item[2)]$\sigma_p(\mathbb A)$ consists of countably many, locally finite, complex hyperplanes  in $\mathbb C^n$.
\end{itemize}
\end{Theorem}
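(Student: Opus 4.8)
The plan is to prove both implications, with $2)\Rightarrow 1)$ carrying essentially all of the difficulty. For $1)\Rightarrow 2)$, if the $A_i$ pairwise commute then, being normal and compact, they are simultaneously diagonalizable: there is an orthonormal basis $\{e_j\}$ of $H$ and vectors $\lambda_j=(\lambda_{1,j},\dots,\lambda_{n,j})$ with $A_ie_j=\lambda_{i,j}e_j$. Then $I+\sum_i z_iA_i$ is diagonal with eigenvalues $1+\sum_i\lambda_{i,j}z_i$, whence
$$\sigma_p(\mathbb A)=\bigcup_j\Big\{z\in\mathbb C^n:1+\sum_{i=1}^n\lambda_{i,j}z_i=0\Big\},$$
a countable union of affine hyperplanes (indices $j$ with $\lambda_j=0$ contribute nothing). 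Local finiteness follows from compactness alone: since $\|\lambda_j\|\to0$, on any bounded set $|1+\sum_i\lambda_{i,j}z_i|\ge\frac12$ for all but finitely many $j$, so only finitely many of the hyperplanes meet a given bounded neighborhood.

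For $2)\Rightarrow 1)$ I would first reduce to a pair. Fixing $i<j$ and setting $z_k=0$ for $k\neq i,j$ cuts $\sigma_p(\mathbb A)$ by a coordinate $2$-plane $P$; as no hyperplane $\{1+\sum_l c_lz_l=0\}$ can contain $P$ (it avoids the origin), the slice $\sigma_p(A_i,A_j)$ is again a locally finite union of complex lines in $\mathbb C^2$. Since a tuple commutes iff it commutes pairwise, it suffices to treat two normal compact operators $A,B$ satisfying the strong Agmon condition whose joint spectrum is a locally finite union of lines $\ell_k=\{1+\alpha_kz+\beta_kw=0\}$, and to prove $AB=BA$. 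Intersecting with the axes identifies $\{\alpha_k\}=\sigma(A)\setminus\{0\}$ and $\{\beta_k\}=\sigma(B)\setminus\{0\}$, while restricting the arrangement to the line $\{(t,ts):t\in\mathbb C\}$ shows that the nonzero spectrum of the pencil $A+sB$ is exactly $\{\alpha_k+\beta_k s\}_k$. Thus the eigenvalues of $A+sB$ are affine-linear in $s$ and come with a definite pairing $\alpha_k\leftrightarrow\beta_k$ of the eigenvalues of $A$ with those of $B$. This linear structure is the heart of the matter, and it is precisely here that the strong Agmon condition enters: an eigenvalue-free sector lets one enclose the spectrum of $A+sB$ by contours that persist in $s$, so that the associated Riesz projections vary holomorphically and each eigenvalue branch can be continued globally, without branch points and without pathological accumulation at $0$, thereby being pinned to a single line.

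The engine is Schur's inequality $\sum_k|\mu_k(T)|^2\le\|T\|_{HS}^2$, with equality exactly when $T$ is normal, applied to $T=A+sB$. Because $A$ and $B$ are normal, the $s$-free and the $|s|^2$ coefficients of the two sides already agree ($\sum_k|\alpha_k|^2=\|A\|_{HS}^2$ and $\sum_k|\beta_k|^2=\|B\|_{HS}^2$), so the Schur defect collapses to
$$\Phi(s)=2\,\mathrm{Re}\Big(s\big[\mathrm{tr}(A^*B)-\textstyle\sum_k\overline{\alpha_k}\beta_k\big]\Big)\ge0\qquad(s\in\mathbb C),$$
which forces the bracket to vanish and hence $\Phi\equiv0$. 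Therefore $A+sB$ is normal for every $s$; expanding $(A+sB)(A+sB)^*=(A+sB)^*(A+sB)$ and equating the coefficients of $s$ gives $A^*B=BA^*$, and the Fuglede--Putnam theorem (applied to the normal operator $A^*$) upgrades this to $AB=BA$. Running this for every pair yields $1)$.

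The main obstacle is entirely in the reverse direction and is twofold. First, converting the geometric ``union of lines'' hypothesis into the clean analytic assertion that the eigenvalues of $A+sB$ are globally affine-linear with a well-defined pairing requires controlling the eigenvalue branches uniformly up to the accumulation point $0$; this is exactly the purpose of the strong Agmon condition, and the careful verification that the Riesz projections and branches behave as claimed is the delicate step. Second, the Schur identity as stated presumes a Hilbert--Schmidt setting, whereas a normal compact operator need not be Hilbert--Schmidt; I would therefore localize it to the finite-dimensional eigenspaces $E_\alpha=\ker(A-\alpha)$, using that the slopes of the lines through $\alpha$ are exactly the eigenvalues of the compression $B_\alpha=P_\alpha B|_{E_\alpha}$, so that the equality case forces $(I-P_\alpha)BP_\alpha=0$, i.e. $BE_\alpha\subseteq E_\alpha$ for every $\alpha$; this last statement is equivalent to $A^*B=BA^*$ and is the cleanest route to the conclusion. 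It is also the point at which normality is indispensable: a non-commuting pair of triangular matrices can have a pencil with perfectly linear eigenvalues, so the hypothesis cannot be weakened to mere diagonalizability.
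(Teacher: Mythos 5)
Your forward implication and the slicing reduction to pairs are sound and agree with the paper, but the engine of your converse contains two genuine gaps. First, the hypothesis is purely set-theoretic: it tells you that the nonzero point spectrum of $A+sB$ equals the \emph{set} $\{\alpha_k+s\beta_k\}$, but your Schur--Weyl identity $\sum_k\lvert\mu_k(A+sB)\rvert^2=\lVert A+sB\rVert_{HS}^2$ needs eigenvalues counted with algebraic multiplicity and a globally coherent pairing $\alpha_k\leftrightarrow\beta_k$; even the ``free'' equality $\sum_k\lvert\alpha_k\rvert^2=\lVert A\rVert_{HS}^2$ already presumes that each eigenvalue $\alpha$ of $A$ occurs among the $\alpha_k$ exactly $\dim\ker(A-\alpha)$ times, which no part of condition 2) supplies (lines come with no multiplicity data). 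Your proposed mechanism for this --- contours that ``persist in $s$'' thanks to the strong Agmon condition --- does not work: that condition is a sector condition on the eigenvalues of $A$ alone, and the eigenvalues $\alpha_k+s\beta_k$ of the pencil sweep through every sector as $s$ varies (already for commuting pairs), so there is no $s$-independent eigenvalue-free ray on which to anchor Riesz projections. In the paper the strong Agmon condition plays a far more modest role: via Lemma \ref{agmontowagmon} it yields Condition A, i.e.\ a sequence $z_n\to\infty$ with $\lvert 1+\lambda z_n\rvert\geq\epsilon$ for all $\lambda\in\sigma_p(A)$, and this is used exactly once, in Lemma \ref{lemma3.4}, to handle the single degenerate line $\mu w+1=0$ (the case where $0\in\sigma_p(A)$ is paired with an eigenvalue of $B$ of maximal modulus). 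Second, your Hilbert--Schmidt repair by localizing to $E_\alpha=\ker(A-\alpha)$ is missing its key lemma: to force $(I-P_\alpha)BP_\alpha=0$ from an equality case you would need the squared slopes of the lines through $(-1/\alpha,0)$ to sum to $\lVert BP_\alpha\rVert_{HS}^2$, whereas a Schur-type bound for the compression can only see $\lVert P_\alpha BP_\alpha\rVert_{HS}^2$; the difference $\lVert(I-P_\alpha)BP_\alpha\rVert_{HS}^2$ is exactly the quantity you are trying to kill, so the argument presupposes its conclusion.

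For contrast, the paper's route avoids multiplicities, traces, and pencil normality altogether. Lemma \ref{lemma2.1} is a first-order perturbation argument: expanding the Riesz projection of $A+\epsilon B$ about an isolated eigenvalue $\lambda$ of $A$ produces a unit vector $x$ with $Ax=\lambda x$ and $\mu=\langle Bx,x\rangle$. Choosing $\lvert\mu\rvert=\lVert B\rVert$ in Corollary \ref{cor2.4} forces equality in Cauchy--Schwarz, so $Bx=\mu x$ and $x$ is a \emph{common eigenvector} (Lemma \ref{lemma3.4} covers the degenerate case $\lambda=0$). Commutativity then follows from Zorn's lemma applied to joint invariant subspaces on which $A$ and $B$ commute, the induction being legitimate because the line structure of $\sigma_p$ passes to restrictions (Proposition \ref{prop}). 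If you want to salvage your trace computation, the missing ingredient is precisely a multiplicity-counting strengthening of Lemma \ref{lemma2.1}, and proving that is the hard part; as written, what you yourself call ``the heart of the matter'' is asserted rather than proved.
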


Since self-adjoint compact operators and normal matrices satisfy the strong Agmon condition, Theorem \ref{CSZ} is a consequence of Theorem \ref{Main}. The result is proved as follows. At first we will need the following technical condition.
\vskip2mm
\noindent{\bf Condition A.} A normal compact operator $A$ is said to satisfy Condition A, if there is an $\epsilon>0$ such that the set
$
\bigcap_{\lambda\in\sigma_p(A)}\{z\in \mathbb C : \lvert1+\lambda z\rvert\geq\epsilon\}
$
is unbounded.
\vskip2mm
It will be shown that the strong Agmon condition implies Condition A. As in \cite{GST}, to get our main result, the key-point is to consider the case $n=2$. We will prove that if $A$ satisfies Condition A, $B$ is a normal compact operator, then  $[A,B]=0$ if and only if $\sigma_p(A,B)$ consists of countably many, locally finite, complex lines  in $\mathbb C^2$.

Compared to \cite{GST}, firstly, our proof is shorter and more elementary; Secondly, we do not need a stronger hypothesis for the case of normal operators. We conjecture that the result is true for normal compact operators without any extra condition.

\section{Proof of the main result}

In this section, we will prove our main theorem. At first, we will show that the strong Agmon condition implies Condition A.
\begin{lemma}\label{agmontowagmon}
	If a compact operator A satisfies the strong Agmon condition, then there exists  $0<\epsilon<1$ and a complex sequence $\{z_n\}_{n\in \mathbb N}$ such that
		\begin{equation*}
			\lim_{n\rightarrow\infty}z_n=\infty
		\end{equation*}
and for every $\lambda\in\sigma_p(A)$ and $n\in \mathbb N$
		\begin{equation*}
			\lvert 1+\lambda z_n\rvert\geq \epsilon
		\end{equation*}
\end{lemma}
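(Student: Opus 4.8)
The plan is to exhibit an explicit sequence lying on the ray opposite to the Agmon direction. Let $\theta$ and $\epsilon$ be the parameters furnished by the strong Agmon condition, so that no nonzero eigenvalue $\lambda$ of $A$ satisfies $\mathrm{Arg}\,\lambda \in (\theta-\epsilon,\theta+\epsilon)$; shrinking $\epsilon$ if necessary, I may assume $0<\epsilon<\pi/2$. I would then set $z_n = n\,e^{i(\pi-\theta)}$, which clearly tends to $\infty$, and take $\epsilon_0 = \sin\epsilon \in (0,1)$ as the constant claimed in the conclusion. Here $\sigma_p(A)$ is read as the set of eigenvalues of $A$, consistent with Condition A, since $I+zA$ fails to be invertible exactly when $1+\lambda z = 0$ for some eigenvalue $\lambda$.

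The point of the choice $\pi-\theta$ is that multiplication by $z_n$ rotates the forbidden spectral sector precisely onto a sector around the negative real axis. Indeed, for a nonzero eigenvalue $\lambda$ one has $\mathrm{Arg}(\lambda z_n) = \mathrm{Arg}\,\lambda + (\pi-\theta) \pmod{2\pi}$, and since $\mathrm{Arg}\,\lambda \notin (\theta-\epsilon,\theta+\epsilon)$ this forces $\mathrm{Arg}(\lambda z_n) \notin (\pi-\epsilon,\pi+\epsilon) \pmod{2\pi}$. Thus $\lambda z_n$ always lies in the closed region $S_\epsilon = \{w\in\mathbb{C} : |\mathrm{Arg}\,w - \pi| \geq \epsilon\}$, whose angular separation from the negative real axis is at least $\epsilon$.

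The key (and essentially only nontrivial) step is then a planar distance estimate: I claim $\mathrm{dist}(-1, S_\epsilon) = \sin\epsilon$. Since $-1 = e^{i\pi}$ has modulus $1$ and the boundary of $S_\epsilon$ consists of the two rays from the origin with arguments $\pi\pm\epsilon$, dropping a perpendicular from $-1$ onto either ray gives a foot at distance $\cos\epsilon > 0$ along the ray (so the foot genuinely lands on the ray, using $\epsilon<\pi/2$) and perpendicular length $\sin\epsilon$. This yields $|1+\lambda z_n| = |\lambda z_n - (-1)| \geq \sin\epsilon = \epsilon_0$ for every nonzero eigenvalue $\lambda$ and every $n$, uniformly in $n$; note the bound requires no control on $|\lambda z_n|$, so compactness enters only through the description of the spectrum by eigenvalues. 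If $0 \in \sigma_p(A)$, the corresponding factor is $|1+0| = 1 \geq \epsilon_0$, so the inequality holds for all $\lambda\in\sigma_p(A)$, which completes the argument. I would expect the main obstacle to be purely conceptual rather than computational: recognizing that the correct ray direction is $\pi-\theta$ (which converts the eigenvalue-free sector into a neighborhood of $-1$) and verifying the elementary distance estimate above; in fact the argument shows the entire ray $\{re^{i(\pi-\theta)} : r\geq 0\}$ satisfies the bound, and the sequence serves only to escape to infinity.
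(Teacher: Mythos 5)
Your proposal is correct and is essentially the paper's own argument: both choose $z_n$ along the ray at angle $\pi-\theta$ (so the eigenvalue-free sector is rotated onto a neighborhood of the negative real axis) and obtain the uniform bound $\sin$ of the sector's half-angle from an elementary planar distance estimate, with the case $\lambda=0$ handled trivially. The only cosmetic difference is that the paper rescales by $\lvert\lambda\rvert$ and bounds the distance from $-\frac{1}{e^{i\theta}\lambda}$ to the positive real axis by $\frac{\sin\delta}{\lvert\lambda\rvert}$, whereas you bound the distance from $-1$ to the sector complement containing $\lambda z_n$ by $\sin\epsilon$ --- the same computation viewed at a different scale.
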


\begin{proof}
By Definition \ref{sagmon}, there exists $0\leq\theta<2\pi$ and $0<\delta<\pi$ such that $$\sigma_p(e^{i\theta}A) \backslash \{0\}\subseteq\{z\in \mathbb C : 0\leq\mathrm{Arg}(z)<\pi-\delta\quad or\quad \pi+\delta<\mathrm{Arg}(z)<2\pi\}.$$
Take $0<\epsilon<\sin\delta$, $z_n=e^{i\theta}n$, then $\lim_{n\rightarrow\infty}z_n=\infty$. Now, for any $\lambda\in \sigma_p(A)$
\begin{equation*}
e^{i\theta}\lambda\in\sigma_p(e^{i\theta}A)\subseteq\{z\in \mathbb C : 0\leq\mathrm{Arg}(z)<\pi-\delta\quad or\quad \pi+\delta<\mathrm{Arg}(z)<2\pi\}\cup\{0\}
\end{equation*}
Obviously, if $\lambda=0$
\begin{equation*}
\lvert1+\lambda z_n\rvert=1\geq \epsilon
\end{equation*}

If $\lambda \ne 0$, then $-\frac{1}{e^{i\theta}\lambda}\in\{z\in \mathbb C : \delta<\mathrm{Arg}z<2\pi-\delta\}$, since $\mathrm{Arg}(e^{i\theta}\lambda)=\pi-\mathrm{Arg}(-\frac{1}{e^{i\theta}\lambda})$. The distance between $-\frac{1}{e^{i\theta}\lambda}$ and the positive $x\text-axis$ is
\begin{equation*}
\inf_{x>0}\big\lvert x- (-\frac{1}{e^{i\theta}\lambda})\big\rvert\geq \frac{\sin \delta}{\lvert \lambda\rvert}
\end{equation*}
then
\begin{equation*}
\lvert1+\lambda z_n\rvert=\lvert \lambda\rvert\cdot\big\lvert z_n-(-\frac{1}{\lambda})\big\rvert=\lvert \lambda\rvert\cdot\big\lvert n-(-\frac{1}{e^{i\theta}\lambda})\big\rvert\geq\sin\delta\geq \epsilon
\end{equation*}

\end{proof}

\begin{lemma}\label{lemma3.4}
	For compact operators $A$ and $B$, suppose $A$ is normal and satisfies Condition A.
If $\mu\ne0$ is a complex number such that the complex line $\{(z,w)\in\dbC^2:\mu w+1=0\}$ is contained in $\sigma_p(A,B)$, and $\lvert \mu \rvert=\lVert B\rVert$, then there exists a unit vector $x$ such that
\begin{equation}\label{35}
		Ax=0\quad \text{and}\quad Bx=\mu x.
\end{equation}
\end{lemma}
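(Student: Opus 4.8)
The plan is to convert the hypothesis into a statement about kernels and then pass to a limit along a sequence furnished by Condition A. Since the line $\{\mu w+1=0\}$ lies in $\sigma_p(A,B)$, for every $z\in\dbC$ the point $(z,-1/\mu)$ belongs to $\sigma_p(A,B)$, i.e. $\ker\bigl(I+zA-\tfrac1\mu B\bigr)\neq 0$. Condition A provides some $\epsilon\in(0,1)$ and a sequence $z_n\to\infty$ (chosen from the unbounded intersection set in Condition A) with $|1+\lambda z_n|\ge\epsilon$ for every $\lambda\in\sigma_p(A)$. For each $n$ I would pick a unit vector $x_n$ with $(I+z_nA)x_n=\tfrac1\mu Bx_n$, and record that $\lVert\tfrac1\mu B\rVert=\lVert B\rVert/|\mu|=1$ by the hypothesis $|\mu|=\lVert B\rVert$.

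The first key step is that $Ax_n\to0$: since $z_nAx_n=\tfrac1\mu Bx_n-x_n$ has norm at most $2$, we get $\lVert Ax_n\rVert\le 2/|z_n|\to0$. Passing to a subsequence we may assume $x_n\rightharpoonup x$ weakly; compactness of $A$ and $B$ then upgrades this to $Ax_n\to Ax$ and $Bx_n\to Bx$ in norm, so $Ax=0$. To guarantee $x\neq0$ I would exploit the lower bound built into Condition A: because $A$ is normal and compact, $\lVert(I+z_nA)x_n\rVert\ge\epsilon$, since every spectral value $1+z_n\lambda$ of $I+z_nA$ satisfies $|1+z_n\lambda|\ge\epsilon$ (by Condition A for the nonzero eigenvalues, and $|1+z_n\cdot0|=1\ge\epsilon$ on $\ker A$). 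Hence $\lVert Bx_n\rVert=|\mu|\,\lVert(I+z_nA)x_n\rVert\ge|\mu|\epsilon$, and in the limit $\lVert Bx\rVert\ge|\mu|\epsilon>0$, so $x\neq0$.

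Next I would extract the limiting eigen-equation by projecting onto $\ker A$. Let $P$ be the orthogonal projection onto $\ker A$. Normality of $A$ gives $\overline{\operatorname{ran}A}=(\ker A)^\perp$, so $PAx_n=0$, and applying $P$ to $(I+z_nA)x_n=\tfrac1\mu Bx_n$ yields $Px_n=\tfrac1\mu PBx_n$. Letting $n\to\infty$ (the left side converges weakly to $Px$, the right side in norm to $\tfrac1\mu PBx$) gives $Px=\tfrac1\mu PBx$; since $Ax=0$ we have $Px=x$, hence $PBx=\mu x$.

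Finally I would promote this to the full equation using the maximality $|\mu|=\lVert B\rVert$. By orthogonality, $\lVert Bx\rVert^2=\lVert PBx\rVert^2+\lVert(I-P)Bx\rVert^2=|\mu|^2\lVert x\rVert^2+\lVert(I-P)Bx\rVert^2$, while $\lVert Bx\rVert\le\lVert B\rVert\,\lVert x\rVert=|\mu|\,\lVert x\rVert$; comparing forces $(I-P)Bx=0$, so $Bx=PBx=\mu x$ together with $Ax=0$. Normalizing $x$ (legitimate as $x\neq0$) produces the desired unit vector. The main obstacle is the passage to the limit: a priori the vectors $x_n$ converge only weakly, and the troublesome term $z_nAx_n$ couples a blowing-up scalar with a vanishing vector, so the heart of the argument is to (i) bound $z_nAx_n$ and thereby force $Ax_n\to0$, (ii) use the Condition A lower bound together with compactness to keep the weak limit nonzero, and (iii) invoke $|\mu|=\lVert B\rVert$ to upgrade the $\ker A$-component identity $PBx=\mu x$ to the full relation $Bx=\mu x$.
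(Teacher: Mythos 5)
Your proof is correct, and its skeleton coincides with the paper's: extract $z_n\to\infty$ from Condition A, pick unit vectors in $\ker\bigl(I+z_nA-\tfrac1\mu B\bigr)$, pass to a weak limit which compactness of $A$ and $B$ upgrades to norm convergence of $Ax_n$ and $Bx_n$, project onto $\ker A$ (using $PA=0$, valid since $A$ is normal) to get $PBx=\mu x$, and finish with the Pythagorean identity and $|\mu|=\lVert B\rVert$. The one point where you genuinely diverge is the crux of the lemma, the nonvanishing of the weak limit. The paper proves $v_0\neq0$ by contradiction: expanding $(I-P_0)(I+z_{n_k}A)v_{n_k}$ in an eigenbasis of $A$, it shows that $v_0=0$ would force the quantity $\sum_j\lvert 1+\lambda_j z_{n_k}\rvert^2\lvert\langle v_{n_k},e_j\rangle\rvert^2$ to converge simultaneously to $0$ and to a limit at least $\epsilon^2$. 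You instead argue directly: $I+z_nA$ is normal with spectrum $\{1+z_n\lambda:\lambda\in\sigma(A)\}$ bounded below in modulus by $\epsilon$ (Condition A handles the nonzero eigenvalues, $\lvert 1\rvert\geq\epsilon$ handles $0$, and eigenvalues of a compact operator accumulate only at $0$), so $\lVert(I+z_nA)x_n\rVert\geq\epsilon$, hence $\lVert Bx_n\rVert\geq|\mu|\epsilon$, and this lower bound survives the norm-convergent limit to give $\lVert Bx\rVert>0$. This quantitative spectral bound makes your argument a bit cleaner and shorter than the paper's contradiction, at the (mild) cost of invoking the lower bound $\lVert Tv\rVert\geq d(0,\sigma(T))\lVert v\rVert$ for normal $T$, which the paper in effect reproves by hand via the basis expansion. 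Your derivation of $Ax=0$ from $\lVert z_nAx_n\rVert\leq 2$ is the same estimate the paper uses to show $Av_0=0$, just applied earlier; the remaining steps, including the weak-limit identity $Px=\tfrac1\mu PBx$ and the maximality argument forcing $(I-P)Bx=0$, match the paper's essentially verbatim.
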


\begin{proof}
Write
		\begin{equation*}
			A=\sum_j \lambda_j e_j\otimes e_j,
		\end{equation*}
where $\{e_j\}$ is an orthonormal sequence of eigenvectors of $A$ with corresponding eigenvalues $\lambda_j$. Since $A$ satisfies Condition A, there exists  $0<\epsilon<1$ and a complex sequence $\{z_n\}_{n\in \mathbb N}$ such that
		\begin{equation*}
			\lim_{n\rightarrow\infty}z_n=\infty,
		\end{equation*}
and for every $j\in \mathbb N$ and $n\in \mathbb N$
		\begin{equation*}
			\lvert 1+\lambda_j z_n\rvert\geq \epsilon.
		\end{equation*}
Because the complex line $\mu w+1=0$ is contained in $\sigma_p(A,B)$, for every $z\in \mathbb C$, $I+zA-\frac{1}{\mu}B$ has nontrivial kernel. There exists a unit vector $v_n$, such that
\begin{equation}																			\label{39}
		\(I+z_nA-\frac{1}{\mu}B\)v_n=0.
\end{equation}

Since the unit ball of a Hilbert space is weakly compact,  there exists a subsequence $\{v_{n_k}\}$ of $\{v_n\}$  which converges weakly to some vector $v_0\in H$. Since $A, B$ are compact, we have
\begin{eqnarray}
	\lim\limits_{k\to\infty}Av_{n_k}=Av_0,\quad\text{and}\quad
	\lim\limits_{k\to\infty}Bv_{n_k}= Bv_0.\label{41}
\end{eqnarray}

Let $P_0$ be the orthogonal projection onto ker$A$. Now, we claim that $v_0\not=0$. To see this, we argue by contradiction.
Assume $v_0=0$, then
\begin{equation}\label{47}
\lim_{k\to\infty}(I+z_{n_k}A)v_{n_k}=\lim_{k\to\infty}\frac{1}{\mu}Bv_{n_k}=\frac{1}{\mu}Bv_0=0
\end{equation}

In the basis $\{e_j\}_j$
\begin{equation*}
(I-P_0)(I+z_{n_k}A)v_{{n_k}}=\sum_j(1+\lambda_j z_{n_k})\langle v_{n_k},e_j\rangle e_j,
\end{equation*}
which tends to 0, that is
\begin{equation}\label{49}
\lim_{k\to\infty}\sum_j\lvert1+\lambda_j z_{n_k}\rvert^2\lvert\langle v_{n_k},e_j\rangle\rvert^2=0.
\end{equation}

Recall that $\lvert1+\lambda_j z_n\rvert\geq \epsilon$. Combining with (\ref{47})
\begin{eqnarray*}
\sum_j\lvert1+\lambda_j z_{n_k}\rvert^2\lvert\langle v_{n_k},e_j\rangle\rvert^2
&\geq& \epsilon^2\sum_j\lvert\langle v_{n_k},e_j\rangle\rvert^2\\
&=&\epsilon^2(\lVert v_{n_k}\rVert^2-\lVert P_0v_{n_k}\rVert^2)\\
&=&\epsilon^2(1-\lVert P_0(I+z_{n_k}A)v_{n_k} \rVert^2)\\
&\rightarrow&\epsilon^2
\end{eqnarray*}
which contradicts to (\ref{49}).
By (\ref{39}) and (\ref{41})
\begin{equation*}
	Av_0=\lim\limits_{k\to\infty}Av_{n_k}=\lim\limits_{k\to\infty}\frac{1}{z_{n_k}}(-I+\frac{1}{\mu}B)v_{n_k}=0,
\end{equation*}
by which $v_0\in \mathrm {ker}A $. Recall that $P_0$ is the orthogonal projection onto ker$A$, then
\begin{eqnarray*}
v_0&=&P_0v_0\\&=&w-\lim\limits_{k\to\infty}P_0v_{n_k}\\&=&w-\lim\limits_{k\to\infty}P_0(-z_{n_k}A+\frac{1}{\mu}B)v_{n_k}\\&=&w-\lim\limits_{k\to\infty}\frac{1}{\mu}P_0Bv_{n_k}\\&=&\frac{1}{\mu} P_0Bv_0.
\end{eqnarray*}

Since $\lvert\mu\rvert=\lVert B\rVert$, by Pythagorean theorem
\begin{eqnarray*}
\lVert(I-P_0)Bv_0\rVert^2&=&\lVert Bv_0\rVert^2-\lVert P_0Bv_0\rVert^2\\&\leq&\lVert B\rVert^2\lVert v_0\rVert^2-\lVert\mu v_0\rVert^2=0
\end{eqnarray*}
that is
\begin{equation}
Bv_0=P_0Bv_0=\mu v_0
\end{equation}
which shows that $v_0$ is a common eigenvector of $A$ and $B$. By normalizing $v_0$, we get the unit vector $x$ satisfying (\ref{35}).

\end{proof}

We also need the following lemma.

	\begin{lemma}\label{lemma2.1}
	For compact operators $A$ and $B$, suppose $A$ is normal and $(\lambda,\mu)\ne(0,0)$ are complex numbers such that the complex line $\{(z,w)\in\dbC^2:\lambda z+\mu w+1=0\}$ is contained in $\sigma_p(A,B)$,  and $\lambda$ is an isolated eigenvalue of $A$. Then there exists a unit vector $x$, such that
		\begin{equation}\label{4}
			Ax=\lambda x\quad and \quad \mu=\langle Bx,x\rangle
		\end{equation}
	\end{lemma}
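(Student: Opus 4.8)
The plan is to localize the whole problem to the $\lambda$-eigenspace of $A$ and to convert ``the line lies in $\sigma_p(A,B)$'' into a spectral statement for $B$ on that eigenspace. Since $A$ is normal and compact and $\lambda$ is an isolated point of $\sigma(A)$, I would take $P$ to be the orthogonal (Riesz) spectral projection onto $\ker(A-\lambda I)$ and set $Q=I-P$. Then $P$ reduces $A$, $AP=\lambda P$, and on $\mathrm{ran}\,Q$ the operator $A-\lambda I$ is bounded below by the spectral gap $d:=\mathrm{dist}(\lambda,\sigma(A)\setminus\{\lambda\})>0$. Writing $T(z,w)=I+zA+wB$ in block form with respect to $H=\mathrm{ran}\,P\oplus\mathrm{ran}\,Q$ and using the relation $1+\lambda z=-\mu w$ that holds on the line, the $(1,1)$-block becomes exactly $w(B_{11}-\mu I)$, where $B_{11}=PBP|_{\mathrm{ran}\,P}$, while the $(2,2)$-block is $D(z,w):=(I+zA)|_{\mathrm{ran}\,Q}+wB_{22}$ with $B_{22}=QBQ|_{\mathrm{ran}\,Q}$.

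Next I would exhibit points on the line where $D(z,w)$ is invertible, so that a Schur complement controls the singularity of $T(z,w)$. If $\lambda\neq0$, I use the point $(z,w)=(-1/\lambda,0)$, which lies on the line; there $D=(I-A/\lambda)|_{\mathrm{ran}\,Q}$ is invertible by the gap estimate, hence stays invertible for $|w|$ small. If $\lambda=0$ (so $\mu\neq0$ and the line is $w=-1/\mu$), I instead let $z\to\infty$: since $A|_{\mathrm{ran}\,Q}$ is bounded below, $D=(I+zA)|_{\mathrm{ran}\,Q}-\mu^{-1}B_{22}$ is invertible for $|z|$ large with $\|D^{-1}\|\to0$. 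In either regime the hypothesis that the whole line lies in $\sigma_p(A,B)$ makes $T(z,w)$ non-invertible, and with $D$ invertible the Schur complement $M=w(B_{11}-\mu I)-w^{2}B_{12}D^{-1}B_{21}$ is non-invertible on the fixed space $\mathrm{ran}\,P$.

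I would then pass to the limit ($w\to0$ when $\lambda\neq0$, $z\to\infty$ when $\lambda=0$). In both cases $D^{-1}$ stays bounded (indeed $\to0$ when $\lambda=0$), so after dividing by the nonzero scalar $w$ one gets $M/w\to B_{11}-\mu I$ in operator norm while each $M/w$ remains non-invertible. Since the non-invertible operators form a norm-closed set, $B_{11}-\mu I$ is non-invertible on $\mathrm{ran}\,P$. Finally I convert this into an eigenvector. When $\lambda\neq0$, $\mathrm{ran}\,P=\ker(A-\lambda I)$ is finite-dimensional, so non-invertible forces non-injective and there is $x\neq0$ with $B_{11}x=\mu x$. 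When $\lambda=0$, the operator $B_{11}=PBP$ is compact and $\mu\neq0$, so by the Fredholm alternative non-invertibility again yields an eigenvector $B_{11}x=\mu x$. In both cases $x\in\mathrm{ran}\,P$ gives $Ax=\lambda x$ and $PBx=\mu x$, whence $\langle Bx,x\rangle=\langle PBx,x\rangle=\mu\|x\|^{2}$; normalizing $x$ yields (\ref{4}).

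The main obstacle is exactly the case $\lambda=0$ with infinite-dimensional kernel. A naive weak-limit argument (in the spirit of Lemma \ref{lemma3.4}) works smoothly when $\lambda\neq0$: the gap forces the $Q$-component of approximate kernel vectors to vanish, and finite-dimensionality of $\mathrm{ran}\,P$ upgrades weak to strong convergence, producing a nonzero common eigenvector directly. When $\ker A$ is infinite-dimensional this breaks down, since the approximate vectors can tend weakly to $0$. The Schur-complement reformulation circumvents this by reducing everything to non-invertibility of $B_{11}-\mu I$ as a norm limit, where the hypotheses $\mu\neq0$ and compactness of $B_{11}$ let the Fredholm alternative recover an honest eigenvector. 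This is precisely where the isolatedness of $\lambda$ (supplying the spectral gap that makes $D$ invertible) and the normality of $A$ (supplying an orthogonal reducing projection $P$, and hence the compact block $B_{11}$) are essential.
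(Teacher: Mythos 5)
Your argument is correct, and it takes a genuinely different route from the paper's. The paper proves the lemma by analytic perturbation theory: it sets $A_\epsilon=A+\epsilon B$, observes that the line lying in $\sigma_p(A,B)$ forces $\lambda_\epsilon=\lambda+\epsilon\mu$ to be an eigenvalue of $A_\epsilon$, expands the Riesz projection $P_\epsilon=P_0+\epsilon\tilde P+O(\epsilon^2)$ over the same isolating disc, and compresses by $P_0$ to get $P_0(B-\mu I)P_0v_\epsilon=O(\epsilon)$ for unit eigenvectors $v_\epsilon$ of $A_\epsilon$; it then concludes exactly as you do, via finite-dimensionality of $\mathrm{Ran}\,P_0$ when $\lambda\neq0$ and via Riesz--Schauder (a bounded-below contradiction, using $\mu\neq0$) when $\lambda=0$. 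Your Schur-complement factorization replaces the perturbation series and the approximate-eigenvector bookkeeping with an exact algebraic identity: once $D$ is invertible, non-invertibility of $I+zA+wB$ transfers to the $(1,1)$-corner, and norm-closedness of the non-invertibles finishes the localization. Amusingly, the two proofs probe the line at the same asymptotic points --- the paper's parametrization $(z,w)=(-1/\lambda_\epsilon,-\epsilon/\lambda_\epsilon)$ tends to $(-1/\lambda,0)$ when $\lambda\neq0$ and has $z\to\infty$ along $w=-1/\mu$ when $\lambda=0$, matching your two limiting regimes. What your approach buys: it is self-contained (no series expansion of resolvents, no estimate that $\lVert P_0v_\epsilon\rVert\to1$, which the paper uses implicitly in the $\lambda=0$ case), and it only invokes non-invertibility of the pencil rather than nontriviality of its kernel --- though since $I+zA+wB$ is a compact perturbation of the identity, the Fredholm alternative makes $\sigma$ and $\sigma_p$ coincide here, so this is no real gain in generality. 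What the paper's approach buys: the quantitative statement $P_0(B-\mu I)P_0v_\epsilon=O(\epsilon)$ is standard first-order eigenvalue perturbation theory and ports to settings where a clean $2\times2$ block structure is unavailable. Your reliance on normality (so that the Riesz projection is the orthogonal projection onto $\ker(A-\lambda I)$ and reduces $A$, killing the off-diagonal blocks of $A$) and on isolatedness (supplying the spectral gap that makes $D$ invertible, respectively bounded below as $z\to\infty$) is exactly where the paper uses the same hypotheses, via conditions 2 and 3 on the disc $D$.
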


	\begin{proof}
		We can choose a disc $D=D(\lambda,\delta)$ containing $\lambda$ for a small $\delta> 0$ such that:
		\begin{enumerate}[itemindent=2em]
			\item $0\notin D$ if $\lambda\neq 0$
			\item $D\cap \sigma_p(A)=\{\lambda\}$,
			\item $uI-A$ is invertible for $u\in \partial D$.
		\end{enumerate}
	Define:
		\begin{equation*}
			A_\epsilon:=A+\epsilon B\qquad \lambda_\epsilon:=\lambda+\epsilon \mu
		\end{equation*}
Take $\sigma>0$ small enough such that for $0<\lvert\epsilon\rvert<\sigma$, $uI-A_\epsilon$ is invertible for $u\in\partial D$, $\lambda_\epsilon\in D$ and $\lambda_\epsilon\neq 0$. Since $(-\frac{1}{\lambda_\epsilon},-\frac{\epsilon}{\lambda_\epsilon})\in \sigma_p(A,B)$ and
		\begin{equation*}
			\lambda_\epsilon I-A_\epsilon =\lambda_\epsilon(I-\frac{1}{\lambda_\epsilon}A-\frac{\epsilon}{\lambda_\epsilon}B),
		\end{equation*}
we have $\lambda_\epsilon$ is an eigenvalue of $A_\epsilon$.
 For any fixed $\epsilon>0$ small enough, 	take a unit $v_\epsilon$ such that
		\begin{equation*}
			(A_\epsilon-\lambda_\epsilon I )v_\epsilon=0.
		\end{equation*}
Consider the Riesz projections\cite{Con}:
		\begin{equation*}
			P_\epsilon =\frac{1}{2\pi i}\int_{\partial D} (uI-A_\epsilon)^{-1} \mathrm d u
		\end{equation*}
and
		\begin{equation}\label{9}
			P_0=\frac{1}{2\pi i}\int_{\partial D} (uI-A)^{-1} \mathrm d u,
		\end{equation}
then $P_\epsilon\rightarrow P_0$ as $\epsilon \rightarrow 0$. Obviously $P_\epsilon v_\epsilon=v_\epsilon$.
Rewrite $P_\epsilon$ as
		\begin{eqnarray*}
				P_\epsilon &=& \frac{1}{2\pi i}\int_{\partial D} (uI-A_\epsilon)^{-1} \mathrm d u\\
								&=&\sum_{r=0}^\infty\frac{1}{2\pi i}\int_{\partial D} \epsilon ^r((uI-A)^{-1}B)^r (uI-A)^{-1} \mathrm d u\\
								&=&\frac{1}{2\pi i}\int_{\partial D} (uI-A)^{-1} \mathrm d u \\
								&& + \frac{1}{2\pi i}\epsilon \int_{\partial D}(uI-A)^{-1}B (uI-A)^{-1}\mathrm d u+ O(\epsilon^2)\\
								&=&P_0+\epsilon \tilde P+O(\epsilon^2),
		\end{eqnarray*}
where $\tilde P=\frac{1}{2\pi i}\int_{\partial D}(uI-A)^{-1}B (uI-A)^{-1}\mathrm d u$. Accordingly $(A_\epsilon-\lambda_\epsilon I)P_\epsilon$ can be written as
		\begin{eqnarray}
							(A_\epsilon-\lambda_\epsilon)P_\epsilon &=&\big(A-\lambda I+\epsilon(B-\mu I)\big)\big(P_0+\epsilon \tilde P+O(\epsilon^2)\big)\nonumber\\
																			&=&(A-\lambda I)P_0+\epsilon\big((A-\lambda I)\tilde P+(B-\mu I)P_0\big)+O(\epsilon^2)\label{12}
		\end{eqnarray}
Please note that
		\begin{equation*}
			(A-\lambda I)P_0=P_0(A-\lambda I)=0.
		\end{equation*}
Multiplying $P_0$ to the left of (\ref{12})
		\begin{equation}\label{16}
			P_0(A_\epsilon-\lambda_\epsilon I)P_\epsilon=\epsilon P_0(B-\mu I)P_0+O(\epsilon^2)
		\end{equation}
Recall that $v_\epsilon$ is a unit eigenvector, together with (\ref{16})
		\begin{equation}\label{17}
			P_0(B-\mu I)P_0v_\epsilon=O(\epsilon)
		\end{equation}

\vskip1.5mm	If $\lambda\neq0$. Then since $A$ is compact, the range $\mathrm{Ran}P_0$ is of finite dimension. Thus we can choose a converging subsequence of $\{P_0v_\epsilon\}$ with the limit $v_0$. In (\ref{17}), let $\epsilon\rightarrow 0$ in the subsequence
		\begin{equation}\label{18}
			P_0(B-\mu I)v_0=0.
		\end{equation}
	We have $\lVert v_0\rVert=1$ because
		\begin{equation*}
			1\geq\lVert v_0\rVert\geq \lVert P_\epsilon v_\epsilon\rVert-\lVert P_\epsilon v_\epsilon- P_0v_\epsilon\rVert -\lVert v_0-P_0 v_\epsilon\rVert.
		\end{equation*}

\vskip1.5mm	If $\lambda=0$, then $\mu\neq0$. Consider $\tilde{B}=P_0(B-\mu I)P_0$ and an operator on $\mathrm{Ran} P_0$. Then $\tilde{B}$ has nontrivial kernel.   Otherwise suppose it were injective. Since $P_0BP_0$ is compact, by Riesz-Schaulder theory, $\tilde{B}$ is invertible. Therefore there exists $d>0$ such that
		\begin{equation*}
			 \lVert P_0(B-\mu I)P_0v\rVert\geq d\lVert P_0v\rVert,\quad \textrm{for all } v\in H,
		\end{equation*}
which contradicts to (\ref{17}).

In summary, there is a unit vector $v_0$ such that (\ref{18}) holds whether $\lambda=0$ or not. 	Let $x=v_0$, we have (\ref{4}).
\end{proof}

From the above technical lemma, we have
\begin{corollary}\label{cor2.4}
Let $A$ and $B$ be normal compact operators such that $A$ satisfies Condition A. If $\sigma_p(A,B)$ consists of complex lines, then $A$ and $B$ have a common eigenvector.
\end{corollary}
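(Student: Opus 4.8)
The plan is to locate a single complex line in $\sigma_p(A,B)$ to which one of the two preceding lemmas applies, choosing that line so that the comparatively weak conclusion of Lemma~\ref{lemma2.1} can be promoted to a genuine eigenvalue equation for $B$. First I would dispose of the trivial case $B=0$: then any eigenvector of the normal compact operator $A$ (which exists by the spectral theorem for normal compact operators) is automatically a common eigenvector. So assume $B\neq0$ and, again by the spectral theorem, fix an eigenvalue $\nu_0$ of $B$ with $\lvert\nu_0\rvert=\lVert B\rVert>0$.

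Since $I-\nu_0^{-1}B$ has nontrivial kernel, the point $(0,-1/\nu_0)$ lies in $\sigma_p(A,B)$. Because $I$ is invertible we have $(0,0)\notin\sigma_p(A,B)$, so no line in the decomposition of $\sigma_p(A,B)$ passes through the origin; hence the line $\ell\subseteq\sigma_p(A,B)$ containing $(0,-1/\nu_0)$ may be normalized as $\ell=\{(z,w)\in\dbC^2:\alpha z+\beta w+1=0\}$, and substituting the point gives $\beta=\nu_0$.

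Now I would split on $\alpha$. If $\alpha=0$, then $\ell=\{\nu_0 w+1=0\}\subseteq\sigma_p(A,B)$ with $\nu_0\neq0$ and $\lvert\nu_0\rvert=\lVert B\rVert$, so Lemma~\ref{lemma3.4} applies (and this is the only place Condition~A is genuinely used), producing a unit vector $x$ with $Ax=0$ and $Bx=\nu_0 x$. If instead $\alpha\neq0$, then the point $(-1/\alpha,0)\in\ell\subseteq\sigma_p(A,B)$ forces $\ker(I-\alpha^{-1}A)\neq0$, so $\alpha$ is a nonzero eigenvalue of $A$, which is automatically isolated since $A$ is compact; Lemma~\ref{lemma2.1} then yields a unit vector $x$ with $Ax=\alpha x$ and $\langle Bx,x\rangle=\nu_0$.

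The step I expect to be the crux is upgrading this last identity, since Lemma~\ref{lemma2.1} only controls the expectation $\langle Bx,x\rangle$ rather than $Bx$ itself. Choosing $\nu_0$ of maximal modulus is exactly what closes the gap: from $\lVert B\rVert=\lvert\nu_0\rvert=\lvert\langle Bx,x\rangle\rvert\leq\lVert Bx\rVert\leq\lVert B\rVert$ every inequality must be an equality, so the equality case of Cauchy--Schwarz gives $Bx=\gamma x$ for a scalar $\gamma$, and pairing with $x$ shows $\gamma=\nu_0$. Thus $Bx=\nu_0 x$, and in every case $x$ is a common eigenvector of $A$ and $B$. The remaining points are routine and I would only verify them in passing: that a normal compact $B\neq0$ has an eigenvalue of modulus $\lVert B\rVert$, and that nonzero eigenvalues of the compact operator $A$ are isolated.
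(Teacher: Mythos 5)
Your proof is correct and takes essentially the same route as the paper: choose an eigenvalue $\mu$ of $B$ of maximal modulus, take the line of $\sigma_p(A,B)$ through $(0,-1/\mu)$, and split on whether its $z$-coefficient vanishes, applying Lemma~\ref{lemma3.4} in the first case and Lemma~\ref{lemma2.1} in the second. Your explicit Cauchy--Schwarz equality argument upgrading $\langle Bx,x\rangle=\mu$ to $Bx=\mu x$ is precisely the step the paper leaves implicit when it says ``we have the desired result,'' and it is the right way to close that gap.
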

\begin	{proof}
Choose $\mu$ to be the eigenvalue of $B$ with maximal norm, that is $|\mu|=\lVert B\rVert$. The case $\mu=0$ is trivial, so suppose $\mu\ne 0$. The point $(0,-\frac{1}{\mu})$ is contained in $\sigma_p(A,B)$. By the assumption on $\sigma_p(A,B)$, there is  a complex line $\lambda z+\mu w+1=0$ in $\sigma_{p}(A,B)$ containing $(0,-{1\over\mu})$.

If $\lambda\ne0$, then $(-\frac{1}{\lambda},0)$ is contained in $\sigma_p(A,B)$, which indicates $\lambda$ is a nonzero eigenvalue of $A$. By Lemma \ref{lemma2.1} we have the desired result.

If $\lambda=0$, the corollary comes from Lemma \ref{lemma3.4}.
\end{proof}

Suppose $A$ and $B$ satisfy the conditions in Corollary \ref{cor2.4}. Define two sets of subspaces of $H$
\begin{eqnarray*}
\cV&=&\{V\subseteq H:A(V)\subseteq V,\,B(V)\subseteq V\}\\
\cW&=&\{W\in\cV:AB=BA\text{ on }W\}
\end{eqnarray*}
we have $0\in\cW$, and $A$ and $B$ commute if and only if $H\in\cW$.

By Zorn's lemma, $\cW$ has a maximal element $W$ with respect to inclusion, and we argue by contradiction to show $W=H$. $W$ is closed since $\overline W\in\cW$. Assume that $W\subsetneq H$, that is $W^\perp\neq0$. If there exists a common eigenvector of $A$ and $B$ in $W^\perp$, let $W'$ be the subspace generated by the vector, then $0\neq W'\subseteq W^\perp$ such that $W'\in\cW$, then $W\oplus W'\in\cW$, which contradicts the maximality of $W$.

$W^\perp\in\cV$ because $A$ and $B$ are normal. Denote the restricted operators on the Hilbert space $W^\perp$ by $A'$ and $B'$. We only need to show that $A'$ and $B'$ have a common eigenvector. This is done if the operators $A'$ and $B'$ on the Hilbert space $W^\perp$ satisfy the conditions in Corollary \ref{cor2.4}, which is assured by the following proposition

\begin{proposition}\label{prop}
Let $A$ and $B$ be normal compact operators such that $\sigma_p(A,B)$ consists of countably many, locally finite, complex lines in $\dbC^2$. If $W$ is a closed invariant subspace of both $A$ and $B$, then the restricted operators on $W$ have the same property as $A$ and $B$, that is $A\rvert_W$ and $B\rvert_W$ are normal compact operators over the Hilbert space $W$ such that $\sigma_p(A\rvert_W,B\rvert_W)$ consists of countably many, locally finite, complex lines.
\end{proposition}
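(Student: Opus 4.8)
The plan is to exploit the fact that normal compact operators are \emph{reductive}, so that the closed invariant subspace $W$ in fact \emph{reduces} both $A$ and $B$; this immediately yields the first assertion, and then the description of $\sigma_p(A|_W,B|_W)$ will follow from the fact that, for a holomorphic family of operators of the form identity-plus-compact, the set of non-invertibility is an analytic hypersurface.

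First I would show that $W$ reduces $A$ (and likewise $B$). Writing $A=\sum_j\lambda_jP_j$ for the spectral decomposition of the compact normal operator $A$, with $P_j$ the finite-rank eigenprojections for the nonzero eigenvalues and $P_0$ the projection onto $\ker A$, each nonzero $\lambda_j$ is isolated in $\sigma(A)$, so there is a function on $\sigma(A)$ equal to $1$ at $\lambda_j$ and $0$ elsewhere, which is continuous since $\lambda_j$ is isolated. As $\sigma(A)$ is a countable compact set, its complement in $\dbC$ is connected, so by Mergelyan's theorem this function is a uniform limit of polynomials $p_n$. Because $W$ is $A$-invariant we have $p_n(A)W\subseteq W$, and $p_n(A)\to P_j$ in norm, whence $P_jW\subseteq W$; the kernel component $P_0x=x-\sum_jP_jx$ then also lies in $\overline W=W$. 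Thus every spectral projection of $A$ preserves $W$, so $A^*W\subseteq W$ and $W$ reduces $A$. Consequently $A|_W$ and $B|_W$ are again normal (restriction to a reducing subspace) and compact (restriction to a closed subspace), which is the first claim; moreover $H=W\oplus W^\perp$ reduces the whole pencil $I+zA+wB$.

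Next I would locate $\sigma_p(A|_W,B|_W)$. Since $W$ reduces everything, a kernel vector in $W$ is a kernel vector in $H$, so $\sigma_p(A|_W,B|_W)\subseteq\sigma_p(A,B)=\bigcup_kL_k$, a countable, locally finite union of complex lines; in particular it is not all of $\dbC^2$. Now regard $F(z,w)=I_W+zA|_W+wB|_W$ as a holomorphic family of index-$0$ Fredholm operators on the connected domain $\dbC^2$, each of the form identity-plus-compact. By the analytic Fredholm theorem its non-invertibility set $\Sigma_W:=\sigma_p(A|_W,B|_W)$ is either all of $\dbC^2$ or a proper analytic hypersurface; being contained in $\bigcup_kL_k$ it must be the latter (or empty). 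The decisive point is that, being locally the zero set of a single holomorphic function, $\Sigma_W$ is of pure dimension one and hence has \emph{no isolated points}.

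Finally I would identify the components. Each irreducible component $C$ of $\Sigma_W$ is a one-dimensional irreducible analytic subset contained in the locally finite family $\bigcup_kL_k$; if $C$ were contained in no single $L_k$, then every $C\cap L_k$ would be a proper, hence $0$-dimensional, analytic subset of $C$, forcing the irreducible curve $C$ to be a countable union of discrete sets, which is absurd. Thus $C\subseteq L_k$ for a single $k$, and by equality of dimensions $C=L_k$. Therefore $\Sigma_W$ is the union of a subfamily of $\{L_k\}$, which is automatically countable and locally finite, and together with the first assertion this proves the Proposition. The main obstacle is precisely the elimination of stray isolated points: a priori a subset of a union of lines need not be a union of lines, and it is the hypersurface (single-equation) structure supplied by the analytic Fredholm theorem, rather than any line-by-line pencil argument, that rules them out.
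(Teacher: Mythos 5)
Your proof is correct, but it takes a genuinely different (and far more explicit) route than the paper, whose entire proof of this Proposition is a one-line pointer: ``this can be concluded from the proof of Theorem 11 of \cite{GST}.'' Your argument supplies, self-contained, the two nontrivial ingredients hidden behind that citation. First, the reductivity step: the restriction of a normal operator to a merely invariant subspace need not be normal (think of the bilateral shift and the Hardy subspace), so the Proposition genuinely requires that $W$ \emph{reduce} $A$ and $B$, and your Mergelyan/Lavrentiev argument --- exploiting that the spectrum of a compact normal operator is countable, hence has connected complement and empty interior, so the isolated-eigenvalue indicator functions (equivalently $\bar z$ itself) are uniform limits of polynomials, giving $A^*W\subseteq W$ --- is the standard and correct way to obtain this. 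Notably, the paper silently uses exactly this fact in the sentence ``$W^\perp\in\mathcal{V}$ because $A$ and $B$ are normal,'' which is precisely the reductivity assertion you prove. Second, the spectral step: your several-variable analytic Fredholm reduction (locally, invertibility of $I_W+zA\rvert_W+wB\rvert_W$ is equivalent to nonvanishing of the determinant of a finite holomorphic Schur complement, so the noninvertibility set is either all of $\dbC^2$ or a pure one-dimensional analytic hypersurface) is the right mechanism for eliminating stray proper subsets of the lines $L_k$, and your component argument is sound; the only cosmetic repair is in the countability step, where you should say each $C\cap L_k$ is discrete \emph{and closed} in $C$, hence countable, so that $C$ itself would be countable --- impossible for a positive-dimensional analytic set. (Also note $\sigma_p(A\rvert_W,B\rvert_W)\subseteq\sigma_p(A,B)$ needs only invariance, while normality of the restrictions needs the reducing property, a distinction you handle correctly.) Conceptually your hypersurface argument parallels the analyticity machinery that \cite{GST} deploy in their Theorem 11, so the two routes agree at the level of mechanism; what the paper's citation buys is brevity, while your version buys a verifiable, self-contained proof valid for an arbitrary closed invariant (not a priori reducing) subspace, which is exactly the generality the Proposition claims.
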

\begin{proof}
This can be concluded from the proof of Theorem 11 of \cite{GST}.
\end{proof}

The reason that the commutativity of $A$ and $B$ implies that  $\sigma_p (A,B)$ consists of countably many, locally finite, complex lines in $\mathbb C ^2$ is trivial, since $A$ and $B$ are diagonalized by an orthonormal basis, see the proof of Theorem 11 in \cite{GST} for detail. We have our main result.
\begin{Theorem}\label{thm4}
If $A$ and $B$ are normal and compact, and $A$ satisfies Condition A, then the following conditions are equivalent:
			\begin{enumerate}
				\item $A$, $B$ are commutative,
				\item $\sigma_p (A,B)$ consists of countably many, locally finite, complex lines in $\mathbb C ^2$.
			\end{enumerate}
\end{Theorem}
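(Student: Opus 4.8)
The plan is to prove the two implications separately: $(1)\Rightarrow(2)$ is the elementary direction and rests on simultaneous diagonalization, whereas $(2)\Rightarrow(1)$ is the substantive one, which I would assemble from Corollary \ref{cor2.4}, Proposition \ref{prop}, and a maximality argument.

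For $(1)\Rightarrow(2)$, I would invoke the spectral theorem for commuting normal compact operators. If $[A,B]=0$, then $A$ and $B$ are diagonalized by a common orthonormal basis $\{e_j\}$, with $Ae_j=\lambda_j e_j$ and $Be_j=\mu_j e_j$, so on the span of $e_j$ the operator $I+zA+wB$ acts as multiplication by $1+\lambda_j z+\mu_j w$. Hence $\sigma_p(A,B)=\bigcup_j\{(z,w)\in\dbC^2:1+\lambda_j z+\mu_j w=0\}$, a countable union of complex lines (those with $(\lambda_j,\mu_j)\neq(0,0)$). Compactness gives $(\lambda_j,\mu_j)\to(0,0)$, so near any fixed $z_0=(a,b)$ the quantity $1+\lambda_j a+\mu_j b$ is close to $1$ for all but finitely many $j$; thus only finitely many lines meet a small enough neighborhood of $z_0$, which is local finiteness. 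I would present this as in \cite{GST}.

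For $(2)\Rightarrow(1)$, I would argue by contradiction using a maximal commuting invariant subspace. Let $\cV$ be the subspaces invariant under both $A$ and $B$, and $\cW\subseteq\cV$ those on which $A$ and $B$ commute, so that $0\in\cW$ and the goal is $H\in\cW$. The union of any chain in $\cW$ is again in $\cW$, so Zorn's lemma yields a maximal $W$; since invariance and commutativity both pass to the closure by continuity, $\overline W\in\cW$ and maximality forces $W$ closed. Suppose $W\subsetneq H$. Since $A$ and $B$ are normal compact operators, their invariant subspaces reduce them, so $W^\perp\in\cV$ and the restrictions $A'=A|_{W^\perp}$, $B'=B|_{W^\perp}$ are again normal and compact. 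By Proposition \ref{prop} the spectrum $\sigma_p(A',B')$ is again a countable, locally finite union of complex lines, and Condition A is inherited by $A'$: from $\sigma_p(A')\subseteq\sigma_p(A)$ the set $\bigcap_{\lambda\in\sigma_p(A')}\{z:|1+\lambda z|\geq\epsilon\}$ only enlarges, hence stays unbounded with the same $\epsilon$. Therefore $A'$ and $B'$ satisfy the hypotheses of Corollary \ref{cor2.4}, which produces a common unit eigenvector $x\in W^\perp$ of $A'$ and $B'$, hence of $A$ and $B$. Then $W\oplus\dbC x\in\cW$ strictly contains $W$, contradicting maximality; so $W=H$ and $A$, $B$ commute.

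The main obstacle, once the technical lemmas are granted, is the restriction step: one must verify that passing to $W^\perp$ preserves every hypothesis of Corollary \ref{cor2.4}. Normality together with compactness is what makes $W^\perp$ reducing (so that $A'$, $B'$ are genuine restrictions rather than compressions), Proposition \ref{prop} transports the geometric hypothesis on $\sigma_p$, and the monotonicity of the defining intersection under shrinking $\sigma_p(A)$ transports Condition A. With these inheritance facts secured the contradiction with maximality is immediate, and the theorem follows.
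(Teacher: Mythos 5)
Your proof is correct and takes essentially the same route as the paper: the implication $(1)\Rightarrow(2)$ via simultaneous diagonalization as in \cite{GST}, and $(2)\Rightarrow(1)$ via Zorn's lemma on a maximal commuting invariant subspace $W$, using normality to get $W^\perp\in\cV$, Proposition \ref{prop} to transport the hypothesis on $\sigma_p$, and Corollary \ref{cor2.4} to produce a common eigenvector contradicting maximality. Your explicit check that Condition A is inherited by $A|_{W^\perp}$ (since $\sigma_p(A|_{W^\perp})\subseteq\sigma_p(A)$ only enlarges the defining intersection) is a small detail the paper leaves implicit, but the argument is otherwise identical.
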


Because self-adjoint operators and finite rank operators satisfy the strong Agmon condition automatically, by Lemma \ref{agmontowagmon} and Theorem \ref{thm4}, we have

\begin{corollary}\label{cor}
Let $A$ and $B$ are normal compact operators. Suppose $A$ is self-adjoint or finite rank. Then the followings are equivalent:
			\begin{enumerate}
				\item $A$, $B$ are commutative,
				\item $\sigma_p (A,B)$ consists of countably many, locally finite, complex lines in $\mathbb C ^2$.	
			\end{enumerate}
\end{corollary}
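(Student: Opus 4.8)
The plan is to deduce Corollary \ref{cor} directly from Theorem \ref{thm4} by verifying that, under either hypothesis on $A$, the operator $A$ satisfies Condition A. Since Lemma \ref{agmontowagmon} shows that the strong Agmon condition implies Condition A (the sequence $\{z_n\}$ it produces lies in $\bigcap_{\lambda\in\sigma_p(A)}\{z:|1+\lambda z|\ge\epsilon\}$ and tends to infinity, so this intersection is unbounded), it suffices to check that a self-adjoint compact operator and a finite rank normal operator each satisfy the strong Agmon condition of Definition \ref{sagmon}. The two cases correspond exactly to the two alternatives in the hypothesis, and $B$ is only required to be normal and compact, which is given.

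For the self-adjoint case, every eigenvalue of $A$ is real, so each nonzero $\lambda\in\sigma_p(A)$ satisfies $\mathrm{Arg}\,\lambda\in\{0,\pi\}$. Taking $\theta=\pi/2$ and any $0<\epsilon<\pi/2$, the open sector $\{z:\theta-\epsilon<\mathrm{Arg}\,z<\theta+\epsilon\}$ around the positive imaginary axis contains no nonzero eigenvalue of $A$, so the strong Agmon condition holds.

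For the finite rank case, $A$ has only finitely many distinct nonzero eigenvalues $\lambda_1,\dots,\lambda_m$, hence finitely many arguments $\mathrm{Arg}\,\lambda_1,\dots,\mathrm{Arg}\,\lambda_m$ in $[0,2\pi)$. One chooses any $\theta\in(0,2\pi)$ distinct from all of these arguments; since there are only finitely many forbidden values, $\epsilon>0$ can then be taken small enough that the sector $\{z:\theta-\epsilon<\mathrm{Arg}\,z<\theta+\epsilon\}$ avoids each $\mathrm{Arg}\,\lambda_i$. Again no nonzero eigenvalue lies in the sector, so strong Agmon holds.

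With the strong Agmon condition verified in both cases, Lemma \ref{agmontowagmon} yields that $A$ satisfies Condition A, and the equivalence of the two statements is then precisely Theorem \ref{thm4}. I do not expect any genuine obstacle here: the corollary is a specialization of the main result, and the only content is the elementary geometric observation that a purely real point spectrum (self-adjoint case) or a finite point spectrum (finite rank case) always leaves an open angular sector free of nonzero eigenvalues.
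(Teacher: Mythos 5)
Your proposal is correct and follows exactly the paper's route: the paper derives Corollary \ref{cor} by noting that self-adjoint and finite rank operators satisfy the strong Agmon condition and then invoking Lemma \ref{agmontowagmon} and Theorem \ref{thm4}. The only difference is that you spell out the elementary sector-avoidance verification (real spectrum, respectively finitely many arguments) that the paper leaves as an assertion, and your verification is sound.
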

Obviously, if both $A$ and $B$ are finite rank, then the commutativity of $A$ and $B$ is equivalent to the finite dimensional case, and Corollary \ref{cor} recover Theorem \ref{CSZ}. Next, we give an example which shows that there is a normal compact operator that does not satisfy Condition A.

\begin{exam}
Let $H$ be Hilbert space with an orthonormal basis $\{e_{n,i}: n\in \mathbb N; 1\leq i\leq 2^n\}$. Set $\omega_{n,i}$ be the ith root of $x^{2^n}=1$. Let $\nu_{n}=\sum\limits_{j=1}^n{1\over j}$. Then $$\lambda_{n,i}={1\over \nu_n \omega_{n,i}}\to 0.$$ It is easy to verify that the operator $A=\sum\limits_{n,j}\lambda_{n,j}e_{n,j}\otimes e_{n,j}$ does not satisfy Condition A.
\end{exam}

\qed

\vskip2.5mm\noindent {\bf Acknowledgements.} The third author would like to thank Prof. Kehe Zhu for several discussions on questions in this note. The authors would like to thank the referee(s) sincerely for valuable suggestions which make the paper more readable.

%REFERENCE
%REFERENCE
%REFERENCE
%REFERENCE


\begin{thebibliography}{99}

\bibitem{Ag}S. Agmon, \emph{On the eigenfunctions and the eigenvalues of general elliptic boundary value problems,} Comm. Pure Appl. Math., Vol. 15 (1962), 119-147.


\bibitem{GST}
		I. Chagouel, M. Stessin and K. Zhu,
		\emph{Geometric Spectral Theory for Compact Operators}.
		Trans.  Amer. Math. Soc.,
		Vol. 368 (2016), no. 3,
		 1559-1582.

\bibitem{Con}J.Conway, \emph{A Course in Functional Analysis},  GTM 96. Springer-Verlag, New York, 1990.


\bibitem{Ya}R. Yang, \emph{Projective spectrum in Banach algebras. } J. Topol. Anal. 1 (2009), no. 3, 289-306.




\end{thebibliography}
\end{document}